\DeclareMathAlphabet{\pazocal}{OMS}{zplm}{m}{n}
\newcommand{\Ha}{\mathcal{H}}
\theoremstyle{plain}
\newtheorem{theorem}{Theorem}
\newtheorem{conjecture}[theorem]{Conjecture}
\newtheorem{corollary}[theorem]{Corollary}
\theoremstyle{remark}
\newcommand{\field}[1]{\mathbb{#1}}
\newcommand{\N}{\field{N}}
\newcommand{\R}{\field{R}}
\DeclareMathOperator{\Cut}{Cut}
\newcommand{\superscript}[1]{\ensuremath{^{\textrm{#1}}}}
\def\wu{\superscript{*}}
\def\wg{\superscript{\dag}}
\begin{document}

\title{A note on seminormality of cut polytopes}

\author[Micha{\l} Laso{\'n}]{Micha{\l} Laso{\'n}\wu\footnote{\wu michalason@gmail.com; Institute of Mathematics of the Polish Academy of Sciences, ul.\'{S}niadeckich 8, 00-656 Warszawa, Poland}}

\author[Mateusz Micha{\l}ek]{Mateusz Micha{\l}ek\wg\footnote{\wg wajcha2@poczta.onet.pl; University of Konstanz, Germany}}

\thanks{Micha{\l} Laso\'{n} was supported by Polish National Science Centre grant no. 2019/34/E/ST1/00087.}

\keywords{cut polytope, normal polytope, very ample polytope, seminormal polytope, four color theorem}

\begin{abstract}
	We prove that seminormality of cut polytopes is equivalent to normality.
	This settles two conjectures regarding seminormality of cut polytopes.
\end{abstract}

\maketitle



A \emph{cut} $A\vert B$ in a graph $G(V,E)$ is an unordered partition ($A\vert B=B\vert A$) of its vertices, i.e. $A\sqcup B=V$. A cut $A\vert B$ determines a point in $\R^{\vert E\vert}$, denoted by $\delta_{A\vert B}$, with value $1$ on edges $e$ separated by the cut (i.e. $\vert e\cap A\vert=1$ and $\vert e\cap B\vert=1$) and value $0$ on edges $e$ within cut parts (i.e.~$e\subset A$ or $e\subset B$). The \emph{cut polytope} $\Cut^\square(G)$ corresponding to a graph $G$ is the convex hull of points $\delta_{A\vert B}$ over all cuts $A\vert B$ in $G$, see e.g. \cite{StSu08}.

A polytope $P$ is \emph{normal} if for any $k\in\N$ every lattice point (a point that belongs to the lattice spanned by the lattice points of $P$) in $kP$ is a sum of $k$ lattice points from $P$. A slightly weaker property of a polytope is `very ampleness', see e.g. \cite{LaMi17}. A polytope $P$ is \emph{very ample} if it has only finitely many \emph{gaps} -- lattice points in $kP$ (for some $k$) which are not a sum of $k$ lattice points from $P$. This is equivalent to the fact that for every vertex $v\in P$ the monoid of lattice points in the real cone generated by $P-v$ is generated by lattice points of $P-v$ \cite[Def.~2.2.7]{CoLiSc11}, \cite[Ex.~4.9]{Mi18}. A polytope $P$ is \emph{seminormal} \cite{KoRo20} if for every lattice point $x$, if $2x$ and $3x$ are not gaps, then nor is $x$.

The most well-known conjecture in this area is the following.

\begin{conjecture}[\cite{StSu08}]\label{Conjecture1}
	The cut polytope $\Cut^\square(G)$ is normal if and only if the graph $G$ has no $K_5$ minor.
\end{conjecture}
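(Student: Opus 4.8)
The statement packages two implications of quite different character, and I would separate them. The easy direction is ``$\Cut^\square(G)$ normal $\Rightarrow$ $G$ has no $K_5$ minor''. It rests on two inputs: first, the observation of Sturmfels and Sullivant that $\Cut^\square(K_5)$ is \emph{not} normal -- there is an integer point of $2\cdot\Cut^\square(K_5)$, lying in the cut lattice, that is not a sum of two $0/1$ cut vectors; second, that normality of cut polytopes is inherited by minors (a theorem of Ohsugi). For the latter one notes that $\Cut^\square(G\setminus e)$ is the image of $\Cut^\square(G)$ under deleting the $e$-coordinate -- a lattice projection restricting to a bijection on cut vectors -- while $\Cut^\square(G/e)$ is exactly the face $\{\delta_e=0\}$ of $\Cut^\square(G)$; since faces of normal polytopes are normal, and this projection is ``clean'' enough that normality passes through it, a gap in $k\cdot\Cut^\square(K_5)$ pulls back along any chain of deletions and contractions realizing a $K_5$ minor to a gap in some $k\cdot\Cut^\square(G)$. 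Hence a $K_5$ minor obstructs normality.

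The substance is the converse, ``$G$ has no $K_5$ minor $\Rightarrow$ $\Cut^\square(G)$ normal'', and here I would lean on Wagner's structure theorem: every graph with no $K_5$ minor is a subgraph of a graph built from planar graphs and the Wagner graph $V_8$ by iterated clique-sums along cliques of size at most $3$. As subgraphs are in particular minors and normality is minor-closed, it suffices to prove $\Cut^\square(G)$ normal for these built-up graphs, and the plan then has three ingredients: (i) a gluing lemma -- if $G$ is a clique-sum of $G_1,G_2$ along a clique on at most $3$ vertices and both $\Cut^\square(G_i)$ are normal, then so is $\Cut^\square(G)$; (ii) normality of $\Cut^\square(V_8)$, a single explicit polytope, checked directly; and (iii) normality of $\Cut^\square(G)$ for every \emph{planar} $G$. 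For (i), the cuts of $G$ are precisely the pairs of cuts of $G_1$ and $G_2$ that induce the same cut on the shared clique, so $\Cut^\square(G)$ is a fiber-product-type object over the (tiny, trivially normal) cut polytope of a clique on at most $3$ vertices; one must verify that the cut lattice is generated compatibly and that a lattice point of $k\cdot\Cut^\square(G)$ admits matching length-$k$ decompositions on both sides. Such gluing lemmas go through cleanly for $0$-, $1$-, and $2$-sums; the $3$-sum case is the one that demands real care.

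The genuine obstacle is (iii). Stacked (Apollonian) planar graphs are themselves iterated $3$-clique-sums of copies of $K_4$, so (i) together with the known normality of $\Cut^\square(K_4)$ disposes of them; but a $4$-connected planar triangulation has no clique-cutset of size $\le 3$, so it cannot be cut into smaller pieces this way, and for such $G$ one must turn a lattice point of $k\cdot\Cut^\square(G)$ into an honest sum of $k$ $0/1$ cut vectors by what is essentially a discharging/coloring argument on $G$ or its planar dual -- which is exactly the point at which I would expect the four color theorem to become indispensable rather than incidental (Sturmfels and Sullivant already flag this link). Carrying out this planar case, and in tandem making the $3$-sum gluing lemma fully rigorous, is to my knowledge still open; so while I believe the structural skeleton above is the correct one, the planar step is the place that needs a genuinely new idea rather than bookkeeping -- which is presumably why the statement appears here as a conjecture.
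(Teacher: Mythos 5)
The statement you were handed is stated in the paper as a \emph{conjecture} (due to Sturmfels and Sullivant), and the paper does not prove it: it only recalls that the ``only if'' direction is known, by exactly the two facts you invoke -- $\Cut^\square(K_5)$ is not normal \cite{StSu08} and normality of cut polytopes is a minor-closed property \cite{Oh10}. So your assessment of the status is accurate, and your write-up is honest in not claiming the converse. One technical caveat on the easy direction as you sketch it: $\Cut^\square(G/e)$ is not literally the face $\{x_e=0\}$ of $\Cut^\square(G)$; rather, that face maps onto it under the projection forgetting the $e$-coordinate, and one must also handle parallel edges created by the contraction -- this is where Ohsugi's argument needs care, though the conclusion is as you state.

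For the converse, your skeleton (Wagner's decomposition of $K_5$-minor-free graphs into planar pieces and the Wagner graph $V_8$ glued along cliques of size at most $3$, a clique-sum gluing lemma, and the planar case) is the standard line of attack, and you correctly identify the planar case, together with a fully rigorous $3$-sum lemma, as the genuine obstruction; since that step is open, you have not proved the statement -- but neither does the paper, which is precisely why it appears as a conjecture. What the paper actually contributes in this direction is strictly weaker than normality: Theorem \ref{Theorem1} shows $K_5$-minor-freeness implies \emph{very ampleness} of $\Cut^\square(G)$ (via Fu--Goddyn and Seymour, hence relying on the four color theorem), Theorem \ref{Theorem2} shows seminormality of $\Cut^\square(G)$ is equivalent to normality, and Theorem \ref{thm:equiv} shows that already the $k=3$ dilation statement for planar graphs is \emph{equivalent} to the four color theorem -- which confirms your expectation that 4CT is indispensable rather than incidental in the planar step.
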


The implication from the left to the right is known, as $\Cut^\square(K_5)$ is not normal \cite{StSu08} and normality of cut polytopes is a minor closed property \cite{Oh10}. The difficulty of the conjecture lies in proving that for graphs with no $K_5$ minor the cut polytope is normal. We start with a proof of a weaker property -- very ampleness, for which we did not find an explicit reference and which is a corollary of \cite[Corollary $1.3$]{FuGo99}.

\begin{theorem}\label{Theorem1}
	Suppose a graph $G$ has no $K_5$ minor. Then the cut polytope $\Cut^\square(G)$ is very ample.
\end{theorem}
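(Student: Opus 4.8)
The plan is to establish the cone criterion for very ampleness recalled above at every vertex of $\Cut^\square(G)$, to reduce this to a single vertex using the switching symmetry of cut polytopes, and then to read off the resulting statement from \cite[Corollary~1.3]{FuGo99}.

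First I would introduce, for each $S\subseteq V$, the affine involution $\tau_S$ of $\R^{|E|}$ that replaces the $e$-th coordinate $x_e$ by $1-x_e$ when the edge $e$ is separated by the cut $S\vert V\setminus S$ and leaves it unchanged otherwise. Then $\tau_S$ preserves $\Z^{|E|}$ as well as the lattice $L$ spanned by the lattice points of $\Cut^\square(G)$, and it maps $\Cut^\square(G)$ onto itself, permuting its vertices by $\delta_{A\vert B}\mapsto\delta_{A\triangle S\vert B\triangle S}$; taking $S=A$ sends the vertex $0=\delta_{\emptyset\vert V}$ to the prescribed vertex $\delta_{A\vert B}$. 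Since the linear part of $\tau_S$ carries $\mathrm{cone}(\Cut^\square(G)-v)$ onto $\mathrm{cone}(\Cut^\square(G)-\tau_S(v))$ compatibly with $L$, the criterion need only be verified at $v=0$. There $\mathrm{cone}(\Cut^\square(G)-0)$ is the cut cone of $G$, the lattice $L$ is the cut lattice $\{x\in\Z^{|E|}:\ \sum_{e\in C}x_e\ \text{is even for every cycle}\ C\ \text{of}\ G\}$, and it suffices to show that every point of this lattice lying in the cut cone is a non-negative integer combination of the cut vectors $\delta_{A\vert B}$, which are themselves lattice points of $\Cut^\square(G)$.

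This remaining assertion is exactly the cographic instance of the circuit cover property: the cut cone of $G$ is the circuit cone of the cographic matroid $M^{*}(G)$, whose circuits are the bonds of $G$ and every cut of which is a disjoint union of bonds; the cut lattice consists precisely of the integral weightings that are conservative with respect to the cocircuits of $M^{*}(G)$, which are the cycles of $G$; and since $G$ has no $K_5$ minor, $M^{*}(G)$ avoids the forbidden minors for the circuit cover property. Hence \cite[Corollary~1.3]{FuGo99} applies and writes every such weighting as a non-negative integer sum of bonds, hence of cut vectors. Combined with the reduction of the previous paragraph this yields the cone criterion at every vertex, so $\Cut^\square(G)$ is very ample.

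The one genuinely substantive ingredient is \cite[Corollary~1.3]{FuGo99}; the rest is bookkeeping, whose delicate points I expect to be: the identification of the lattice spanned by the lattice points of $\Cut^\square(G)$ with the ``even on every cycle'' cut lattice, so that no lattice point of the polytope escapes the $\Z$-span of the cut vectors; the matching of conservativeness in \cite[Corollary~1.3]{FuGo99} with membership in that lattice; and the verification that $M^{*}(G)$ avoids the forbidden minors for the circuit cover property precisely because $G$ has no $K_5$ minor, the remaining forbidden minors $F_7^{*}$ and $R_{10}$ being non-cographic and therefore never minors of $M^{*}(G)$.
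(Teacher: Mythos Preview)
Your proposal is correct and follows essentially the same route as the paper: both use the switching involutions to show the cut polytope is vertex-transitive, reduce the cone criterion for very ampleness to the single vertex $0$, and then invoke \cite[Corollary~1.3]{FuGo99} (the cuts form a Hilbert basis of the cut cone) to conclude. Your write-up is simply more explicit than the paper about the lattice identification and the cographic-matroid translation needed to read off the result from \cite{FuGo99}.
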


\begin{proof}	
First, we show that the cut polytope is transitive. That is, for any two vertices $v_1,v_2$ of $\Cut^\square(G)$ there exists an affine isomorphism $\varphi$ of $\R^{\vert E\vert}$ such that $\varphi(\Cut^\square(G))=\Cut^\square(G)$ and $\varphi(v_1)=v_2$. It is enough to show that when $v_1=\delta_{\emptyset\vert E}$ and $v_2=\delta_{A\vert B}$ is arbitrary. Map $\varphi_{A\vert B}$ is defined by:
$x_e\rightarrow x_e$ when $e$ is contained in $A$ or $B$, and $x_e\rightarrow 1-x_e$ when $e$ is separated by the cut $A\vert B$. Observe that 
$$\varphi_{A\vert B}(\delta_{C\vert D})=\delta_{(A\cap D)\cup (B\cap C)\vert (A\cap C)\cup (B\cap D)}.$$
In particular, $\varphi_{A\vert B}(\delta_{\emptyset\vert E})=\delta_{A\vert B}$.

Next, we note that for every vertex $\delta_{A\vert B}\in \Cut^\square(G)$ the monoid of lattice points in the real cone generated by $\Cut^\square(G)-\delta_{A\vert B}$ is isomorphic, via $(\varphi_{A\vert B}-\delta_{A\vert B})^{-1}$, to the monoid of lattice points in the real cone generated by $\Cut^\square(G)-\delta_{\emptyset\vert E}=\Cut^\square(G)$. Thus, in order to show that $\Cut^\square(G)$ is very ample it is enough to check the second characterization of very ample polytopes for a single vertex $\delta_{\emptyset\vert E}$. Therefore, very ample property of the cut polytope coincides with the class $\Ha$ in \cite{GoHuDe16} of graphs whose set of cuts is a Hilbert basis in $\R^{\vert E\vert}$.

The statement that remains is proved in \cite[Corollary $1.3$]{FuGo99} and for planar graphs already in \cite{Se79}. Since both rely on the four color theorem, the theorem also does.
\end{proof}

Remark that the cut polytope of $K_5$ is very ample \cite{De82}. Moreover, very ampleness of cut polytopes is not a minor closed property \cite{GoHuDe16}. In particular, Theorem \ref{Theorem1} does not give a characterization of graphs with very ample cut polytopes.

Using Theorem \ref{Theorem1} we settle Conjectures $1.2$ and $4.5$ from \cite{KoRo20}.

\begin{theorem}\label{Theorem2}
	The cut polytope $\Cut^\square(G)$ of a graph $G$ is seminormal if and only if it is normal. In particular, the class of graphs $G$ for which $\Cut^\square(G)$ is seminormal is minor closed.
\end{theorem}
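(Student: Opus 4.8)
The plan is to prove the two implications separately; ``normal $\Rightarrow$ seminormal'' is immediate (a normal polytope has no gaps at all), so the content lies in the converse, whose main input is Theorem~\ref{Theorem1}.

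I would first record an elementary fact: \emph{a very ample polytope that is seminormal is normal}. If a seminormal polytope $P$ has a gap $x$, then $2x$ and $3x$ cannot both fail to be gaps (otherwise seminormality would force $x$ not to be a gap), so $P$ has a gap of strictly larger degree; iterating produces gaps of unbounded degree, hence infinitely many, contradicting very ampleness. Consequently, if $G$ has no $K_5$ minor then $\Cut^\square(G)$ is very ample by Theorem~\ref{Theorem1}, so seminormality of $\Cut^\square(G)$ forces its normality. This proves the equivalence for all $K_5$-minor-free graphs (without settling Conjecture~\ref{Conjecture1}).

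It remains to treat a graph $G$ with a $K_5$ minor. Such a $\Cut^\square(G)$ is not normal, since normality of cut polytopes is minor closed \cite{Oh10} and $\Cut^\square(K_5)$ is not normal \cite{StSu08}; so for the equivalence I must show it is not seminormal either. First, $\Cut^\square(K_5)$ is very ample \cite{De82} but not normal, hence not seminormal by the fact above. I would then show that non-seminormality of cut polytopes passes from a minor to the ambient graph; since a minor is obtained by edge deletions and edge contractions (a vertex deletion amounting to deleting its incident edges and then removing an isolated vertex, which leaves the cut polytope unchanged), it suffices to treat these two operations. For a contracted edge $e$, $\Cut^\square(G/e)$ is the face $\{x_e=0\}$ of $\Cut^\square(G)$, cut out by the valid inequality $x_e\ge 0$, which vanishes exactly on the vertices of that face; hence any expression of a lattice point of the face as a sum of $k$ vertices of $\Cut^\square(G)$ uses only vertices of the face, and a triple $(x,2x,3x)$ witnessing non-seminormality of $\Cut^\square(G/e)$ witnesses it for $\Cut^\square(G)$ as well. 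For a deleted edge $e$, $\Cut^\square(G\setminus e)$ is the image of $\Cut^\square(G)$ under the coordinate projection $\pi_e$ forgetting $e$, and $\pi_e$ maps the monoid of cut vectors of $G$ onto that of $G\setminus e$ respecting the grading; here one must lift a witness $x$ for $\Cut^\square(G\setminus e)$, together with decompositions of $2x$ and $3x$, to a coherent witness for $\Cut^\square(G)$ by an appropriate choice of the $e$-coordinate. Granting this, a $K_5$ minor forces $\Cut^\square(G)$ to be non-seminormal, completing the equivalence; and the final assertion follows at once, since seminormality of cut polytopes now coincides with the minor-closed property of normality \cite{Oh10}.

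The main obstacle is the edge-deletion step: coordinate projections do not preserve normality in general, and $\pi_e$ genuinely folds the cut polytope (it fails to be injective, for instance when $e$ is a bridge), so one cannot simply pull back a decomposition; the point is to control the $e$-coordinates of the lifts of $x$, $2x$, and $3x$ simultaneously. This is the combinatorial heart of the argument, handled along the lines of Ohsugi's proof \cite{Oh10} that normality of cut polytopes is minor closed. The lemma above, the contraction step, and the final assembly are routine by comparison.
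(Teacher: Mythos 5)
Your first half is sound and is essentially the paper's argument: the observation that a very ample seminormal polytope is normal (your iteration producing gaps of unbounded degree is just the contrapositive of the paper's ``take a largest gap $x$; then $2x$ and $3x$ lie in larger dilations, hence are not gaps, so $x$ is not a gap'') combined with Theorem~\ref{Theorem1} settles the case of $K_5$-minor-free graphs. The problem is the other case. For a graph $G$ with a $K_5$ minor you must show that $\Cut^\square(G)$ is not seminormal, and your route to this is to prove that non-seminormality of cut polytopes lifts from minors — i.e.\ that seminormality of cut polytopes is a minor closed property. But that is precisely Conjecture~4.5 of \cite{KoRo20}, one of the two statements this theorem is meant to settle; in the paper it comes out as a \emph{consequence} of the equivalence with normality (which is minor closed by \cite{Oh10}), not as an ingredient of the proof. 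You cannot treat it as routine, and indeed you do not prove it: the contraction step is fine (the face $\{x_e=0\}$ argument works, since a decomposition of a point of that face of $k\Cut^\square(G)$ can only use cuts lying in the face), but the edge-deletion step is exactly where you write ``granting this'' and defer to ``the lines of Ohsugi's proof.'' Ohsugi's lifting argument produces, for a single lattice point of $k\Cut^\square(G\setminus e)$, some integral lift in $k\Cut^\square(G)$; for seminormality you would need a single lift $x$ such that the non-gap property of $2x$ and $3x$ can be arranged simultaneously and compatibly with the chosen $e$-coordinate of $x$, and nothing in \cite{Oh10} provides this coherence. So the deletion step is a genuine gap, not a routine verification.

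The paper avoids this entirely by quoting \cite[Corollary~4.4]{KoRo20}: seminormality of $\Cut^\square(G)$ already implies that $G$ has no $K_5$ minor. With that input the dichotomy you set up is unnecessary — seminormal $\Rightarrow$ no $K_5$ minor $\Rightarrow$ very ample (Theorem~\ref{Theorem1}) $\Rightarrow$ normal by your own lemma — and the minor-closedness statement then follows for free from normality being minor closed. To repair your write-up, either invoke that corollary in place of your ``$K_5$ minor $\Rightarrow$ not seminormal'' step, or supply an actual proof of the deletion lifting, which you have not done.
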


\begin{proof}
	If $\Cut^\square(G)$ is normal, then clearly it is seminormal.
	
	Let $G$ be a graph such that $\Cut^\square(G)$ is seminormal. Then by \cite[Corollary $4.4$]{KoRo20} graph $G$ has no $K_5$ minor. By Theorem \ref{Theorem1} the cut polytope $\Cut^\square(G)$ is very ample. Suppose contrary, that $\Cut^\square(G)$ is not normal -- it has gaps. Since $\Cut^\square(G)$ is very ample, it has only finitely many gaps. Let $x$ be a largest gap, i.e. a gap that belongs to the largest dilation $k$. Then $2x$ and $3x$ belong to larger dilations, so they are not gaps. Since $\Cut^\square(G)$ is seminormal, $x$ is also not a gap. A contradiction.
\end{proof}

We show how a part of Conjecture \ref{Conjecture1} is equivalent to the four color theorem.

\begin{theorem}\label{thm:equiv}
	The fact that every lattice point in $3\Cut^\square(G)$ is a sum of $3$ lattice points from $\Cut^\square(G)$ for a planar graph $G$ is equivalent to the four color theorem. 
\end{theorem}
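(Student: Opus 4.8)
The plan is to prove both implications of the equivalence, using the structural fact that for planar graphs the only obstruction to normality lives at dilation $3$.

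\textbf{Four color theorem $\Rightarrow$ the dilation-$3$ statement.}
Assume the four color theorem. A planar graph has no $K_5$ minor, so by Theorem \ref{Theorem1} the cut polytope $\Cut^\square(G)$ is very ample; in particular, using transitivity (the maps $\varphi_{A\vert B}$) it suffices to work in the cone generated by $\Cut^\square(G)$ at the single vertex $\delta_{\emptyset\vert E}$. The key point I would establish is that the semigroup gaps of $\Cut^\square(G)$, if any, all occur in dilation exactly $3$ — more precisely, that a lattice point in $k\Cut^\square(G)$ is a sum of $k$ cuts as soon as it is known that every lattice point in $3\Cut^\square(G)$ is a sum of $3$ cuts. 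This is the reduction step: one decomposes a point in $k\Cut^\square(G)$ by peeling off cuts greedily (each subtraction of a vertex of the polytope keeps the remainder a lattice point in a cone, by very ampleness one stays inside the monoid) until the remaining dilation is at most $3$, handling the residual dilation-$1$, $2$, $3$ cases by hypothesis. So the content is: \emph{the four color theorem implies the dilation-$3$ decomposition}, and then normality follows; but since the theorem only asks for the dilation-$3$ statement, I only need the forward direction of the equivalence between "$4$CT" and "dilation $3$ works". For that direction, I would invoke the connection (as in \cite{FuGo99}, \cite{Se79}) between the four color theorem and the fact that the cuts of a planar graph form a Hilbert basis, and then note that Hilbert-basis membership is precisely what certifies the dilation-$3$ (indeed every-dilation) decomposition.

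\textbf{The dilation-$3$ statement $\Rightarrow$ four color theorem.}
For the converse, I would take a planar graph $G$ (it suffices to consider a maximal planar triangulation, or a bridgeless cubic planar graph, depending on which equivalent form of $4$CT is easiest) and produce, from a hypothetical proper-coloring obstruction, a lattice point in $3\Cut^\square(G)$ that fails to be a sum of $3$ cuts. The cleanest route is through the known equivalence of the four color theorem with the statement that every bridgeless cubic planar graph is $3$-edge-colorable (Tait's reformulation): a proper $3$-edge-coloring corresponds to expressing the all-ones vector $\mathbf{1}\in\R^{\vert E\vert}$ — which lies in $3\Cut^\square(G)$ because each edge is cut by an odd number among three complementary cuts — as $\delta_{A_1\vert B_1}+\delta_{A_2\vert B_2}+\delta_{A_3\vert B_3}$ with the supports covering each edge exactly once. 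Here one uses that a cut in a cubic graph is a union of edge-disjoint even subgraphs' complements, and the parity argument that forces each $\delta_{A_i\vert B_i}$ to have value $1$ on exactly the color class $i$. So: if the dilation-$3$ decomposition always holds, then $\mathbf{1}$ decomposes, which yields a proper $3$-edge-coloring, which yields the four color theorem.

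\textbf{Main obstacle.}
The delicate point is the precise dictionary in the second implication: I must argue that \emph{every} decomposition of the specific lattice point $\mathbf{1}$ (or whichever point encodes the coloring) into three cuts is forced to be "coloring-like," i.e. the three cut vectors must be $0/1$ vectors with disjoint supports partitioning $E$, rather than some exotic combination of large and small cuts summing to $\mathbf{1}$ coordinatewise. This is a parity/extremality argument — since $\mathbf 1$ has every coordinate equal to $1$ and each $\delta_{A_i\vert B_i}$ has coordinates in $\{0,1\}$, the sum being $\mathbf 1$ forces exactly one of the three to be $1$ on each edge, so the supports do partition $E$ — and then one checks each support is a cut, hence (in the cubic case, via Tait) a color class. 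Getting the graph-theoretic reformulation of $4$CT to match exactly the combinatorics of cut vectors, and making sure the implication goes through for \emph{all} planar graphs and not just cubic ones (by a reduction to the cubic bridgeless case, or directly via the "every planar graph is properly $4$-vertex-colorable $\Leftrightarrow$ its cuts generate" statement already used in Theorem \ref{Theorem1}), is where the care is needed; the rest is bookkeeping with the maps $\varphi_{A\vert B}$ and the very ampleness from Theorem \ref{Theorem1}.
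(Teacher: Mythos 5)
Both directions of your proposal have genuine gaps.

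\textbf{Forward direction (4CT $\Rightarrow$ dilation-$3$ statement).} Your argument bottoms out in the claim that ``Hilbert-basis membership is precisely what certifies the dilation-$3$ (indeed every-dilation) decomposition.'' This is exactly the conflation of very ampleness with normality that the paper is at pains to keep apart. The Hilbert basis property writes a lattice point $p\in 3\Cut^\square(G)$ as $\sum_i n_i\,\delta_{A_i\vert B_i}$ with $n_i\in\N$, but gives no control whatsoever on $\sum_i n_i$; bounding the number of summands by the dilation is precisely what normality adds, and is false for very ample polytopes in general ($\Cut^\square(K_5)$ is very ample but not normal). If your inference were valid it would prove normality of all planar cut polytopes, i.e.\ the planar case of Conjecture \ref{Conjecture1}, which is open -- the paper's Corollary only reaches $k\le 3$. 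The paper instead argues combinatorially: extending to loopless multigraphs and inducting on edges, it contracts the edges with $p(e)=0$ and applies a switching $\varphi_{A\vert B}$ at edges with $p(e)=3$, reducing to the case $p(e)\in\{1,2\}$; then planar duality together with the parity condition defining the cut lattice (every lattice point has even sum over each cut of $G^*$) shows that $E_1=\{e:p(e)=1\}$ is itself a cut $A\vert B$; finally a proper $4$-coloring $V=V_1\sqcup V_2\sqcup V_3\sqcup V_4$ gives $(2,\dots,2)=\delta_{V_1\sqcup V_2\vert V_3\sqcup V_4}+\delta_{V_1\sqcup V_3\vert V_2\sqcup V_4}+\delta_{V_1\sqcup V_4\vert V_2\sqcup V_3}$, and applying $\varphi_{A\vert B}$ yields the three summands for $p$. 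Your sketch never produces the three summands, and your ``greedy peeling'' remark is both unjustified and unnecessary for the statement.

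\textbf{Backward direction (dilation-$3$ statement $\Rightarrow$ 4CT).} Your witness point $\mathbf{1}$ is in general not a lattice point of the cut lattice: every cut meets every cycle of $G$ in an even number of edges, so every integer combination of cuts has even coordinate-sum along each cycle, while $\mathbf{1}$ has odd sum on any odd cycle. Hence for a non-bipartite bridgeless cubic planar graph (the only interesting case for Tait's reformulation) the dilation-$3$ hypothesis says nothing about $\mathbf{1}$; in the bipartite case $3$-edge-colorability is trivial, so no 4CT content is extracted. Moreover, even granting a decomposition of $\mathbf{1}$ into three cuts with disjoint supports, a partition of $E$ into three cuts is not a proper $3$-edge-coloring: a cut may contain $0$, $2$ or $3$ of the edges at a vertex of a cubic graph, so your ``parity/extremality'' dictionary does not close. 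The paper (via Speyer's argument, \cite[Proposition 9.4]{Mi18}) uses the point $(2,\dots,2)$, which \emph{is} a lattice point of $3\Cut^\square(G)$: in a decomposition $(2,\dots,2)=\delta_1+\delta_2+\delta_3$ every edge is separated by exactly two of the three cuts, and assigning to each vertex the triple recording its side in each cut sends adjacent vertices to distinct elements of the four even-weight classes, giving a proper $4$-coloring of $G$ directly, with no reduction to cubic graphs needed.
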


\begin{proof}
	One implication, proving the four color theorem, is presented in \cite[Proposition 9.4]{Mi18}, but originally the idea is due to David Speyer. We note that this implication only uses a decomposition of one specific point in $3\Cut^\square(G)$.
	
	For the other implication we extend the assertion to loopless multigraphs and proceed by induction on the number of edges. Let $p\in 3\Cut^\square(G)$ be a lattice point. Let $E_0$ be the set of edges $e\in E(G)$ such that $p(e)=0$. Consider the contraction $G':=G/E_0$. Notice that $G'$ may have multiple edges, but it is loopless. Indeed, if $e\in E(G)$ became a loop in $G'$, then there was a path between endpoints $x,y$ of $e$ consisting of edges from $E_0$. This is impossible, as since $p(e)>0$ and $p$ is a convex combination of cuts, points $x,y$ were separated by some cut. Now, we may identify $p$ with a point in $3\Cut^\square(G')$ and conclude by induction.  Similarly, if there is an edge $e\in G$ such that $p(e)=3$ we may take a cut $A|B$ that separates $e$ and apply the isomorphism $\varphi_{A|B}$. We have $\varphi_{A|B}(p)(e)=0$. Hence, by induction $\varphi_{A|B}(p)$ is a sum of three cuts and so is $p$. 
	
	We are left with the case when for each edge $e$ we have $p(e)=1$ or $p(e)=2$. We claim that the set $E_1$ of edges $e$ for which $p(e)=1$ forms precisely the edges of some cut $A|B$. This is equivalent to the fact that on the dual graph $G^*$ the set $E_1$ is a cycle -- by which we mean an edge disjoint union of circuits, in the language of matroid theory, or simple cycles, in the language of graph theory. As $p$ is a lattice point, for any cut of $G^*$ the sum of values $p(e)$ over the cut is even. The same must be true for $E_1$. In particular, every vertex $v\in G^*$ must be adjacent to an even number of edges of $E_1$. By the Euler cycle argument, we know that $E_1$ is a cycle, which finishes the proof of the claim.
	
	Let $V=V_1\sqcup V_2\sqcup V_3\sqcup V_4$ be the four coloring of $G$. Since $G$ is loopless
	\[(2,2,\dots,2)=\delta_{V_1\sqcup V_2|V_3\sqcup V_4}+\delta_{V_1\sqcup V_3|V_2\sqcup V_4}+\delta_{V_1\sqcup V_4|V_2\sqcup V_3}.\]
	Therefore, after applying $\varphi_{A|B}$ we get the assertion of the theorem
	\[p=\varphi_{A|B}(\delta_{V_1\sqcup V_2|V_3\sqcup V_4})+\varphi_{A|B}(\delta_{V_1\sqcup V_3|V_2\sqcup V_4})+\varphi_{A|B}(\delta_{V_1\sqcup V_4|V_2\sqcup V_3}).\]
\end{proof}

\begin{corollary}
Let $G$ be a planar graph.
For $k=1,2,3$ every lattice point of $k\Cut^\square(G)$ is a sum of $k$ lattice points of $\Cut^\square(G)$. 
\end{corollary}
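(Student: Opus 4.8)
The plan is to dispatch the three values of $k$ separately; only $k=2$ requires work. For $k=1$ the statement is vacuous, a lattice point of $\Cut^\square(G)$ being a sum of one lattice point of $\Cut^\square(G)$. For $k=3$ it is precisely Theorem~\ref{thm:equiv}: the four color theorem is a theorem, so every lattice point of $3\Cut^\square(G)$ is a sum of three cuts. Everything thus reduces to $k=2$, where planarity turns out to be unnecessary; I would prove the statement for all loopless multigraphs by induction on the number of edges, rerunning the reduction from the proof of Theorem~\ref{thm:equiv}.

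So let $p\in 2\Cut^\square(G)$ be a lattice point; its coordinates lie in $\{0,1,2\}$. If $E_0:=\{e:p(e)=0\}$ is nonempty, contract it: $G':=G/E_0$ is still loopless (by the argument of Theorem~\ref{thm:equiv}: an $E_0$-path joining the endpoints of an edge $e$ with $p(e)>0$ would be crossed by a cut appearing in a convex representation of $p$), the point $p$ descends to a lattice point of $2\Cut^\square(G')$ with fewer edges, and induction applies. If instead some edge $e$ has $p(e)=2$, pick a cut $A\vert B$ separating $e$ and replace $p$ by $\varphi_{A\vert B}(p)$, understood via the extension of $\varphi_{A\vert B}$ to $2\Cut^\square(G)$ sending $x_e\mapsto 2-x_e$ on separated edges and fixing the rest; then $\varphi_{A\vert B}(p)(e)=0$, so by the previous case $\varphi_{A\vert B}(p)=\delta_{C\vert D}+\delta_{E\vert F}$, whence $p=\varphi_{A\vert B}(\delta_{C\vert D})+\varphi_{A\vert B}(\delta_{E\vert F})$ is a sum of two cuts, since $2-(x+y)=(1-x)+(1-y)$ on separated coordinates and trivially elsewhere. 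This leaves $p(e)=1$ for all $e$, i.e.\ $p=(1,1,\dots,1)$ on a loopless multigraph $G'$. Here, in place of the four color theorem, I would use that $p$ is a \emph{lattice} point: every cut meets every cycle in an even number of edges, hence so does every element of the cut lattice, so $(1,\dots,1)$ being in the cut lattice of $G'$ forces every cycle of $G'$ to be even; being loopless, $G'$ is then bipartite, say $V(G')=X\sqcup Y$, and $(1,\dots,1)=\delta_{X\vert Y}+\delta_{\emptyset\vert V(G')}$ exhibits $p$ as a sum of two cuts. Unwinding the contractions and the maps $\varphi_{A\vert B}$ then returns the decomposition of the original point.

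The point needing the most care is that ``lattice point'' survives the contraction $G\mapsto G/E_0$ and is preserved by $\varphi_{A\vert B}$; I would handle this through the description of the cut lattice modulo $2$. For an edge $e=uv$ of a graph $H$ with at least three vertices one has $2e=\delta_{\{u\}}+\delta_{\{v\}}-\delta_{\{u,v\}}$ in the cut lattice, so $2\mathbb{Z}^{E(H)}$ lies in it and hence the cut lattice of $H$ equals $\{q\in\mathbb{Z}^{E(H)} : q\bmod 2 \text{ lies in the cut space of } H \text{ over } \mathbb{F}_2\}$ (the graphs on at most two vertices are trivial). Granting this, if $p$ is a lattice point of $k\Cut^\square(G)$ with $p|_{E_0}=0$, then $p\bmod 2$ is an element of the $\mathbb{F}_2$ cut space of $G$ that vanishes on $E_0$, hence the incidence vector of a cut separating no edge of $E_0$, hence of a cut of $G/E_0$; together with $p|_{E\setminus E_0}/k\in\Cut^\square(G/E_0)$ — which holds because nonnegativity forces each cut in a convex representation of such a $p$ to respect $E_0$ — this shows $p|_{E\setminus E_0}$ is a lattice point of $k\Cut^\square(G/E_0)$. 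Finally $\varphi_{A\vert B}(p)\equiv p\pmod 2$, so $\varphi_{A\vert B}$ maps lattice points to lattice points. This bookkeeping, not any conceptual difficulty, is the part I would be most careful about.
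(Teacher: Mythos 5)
Your proposal is correct, and for $k=1,3$ it coincides with the paper's proof (trivial case, respectively Theorem~\ref{thm:equiv} combined with the four color theorem). For $k=2$ the skeleton is also the same reduction as in Theorem~\ref{thm:equiv} (contract the edges of value $0$, use $\varphi_{A|B}$ to eliminate the values $2$, leaving the all-ones point), but your endgame differs: the paper again invokes the planar-dual Euler-cycle argument to conclude that the all-ones point is a cut point, hence that the graph is bipartite, whereas you deduce bipartiteness directly from the parity fact that every cut, and hence every element of the cut lattice, meets every cycle in an even number of edges; both proofs then finish with $\delta_{X|Y}+\delta_{\emptyset|V}$. Your variant buys two things: planarity is never used for $k=2$, so that case holds for arbitrary graphs, and you make explicit the bookkeeping (descent of the lattice condition under contraction and under $\varphi_{A|B}$) that the paper compresses into ``we may identify $p$ with a point in $3\Cut^\square(G')$''. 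One caveat on that bookkeeping: your auxiliary claim that the cut lattice of $H$ consists of all integer vectors whose reduction mod $2$ lies in the $\mathbb{F}_2$ cut space is literally false for multigraphs with parallel edges --- for a double edge the vector $(1,-1)$ satisfies the parity condition but is not in the cut lattice, since every cut-lattice vector is constant on parallel classes. The repair is cheap: the points you apply it to lie in $k\Cut^\square(G')$ and are therefore constant on parallel classes, and your identity $2e=\delta_{\{u\}}+\delta_{\{v\}}-\delta_{\{u,v\}}$, applied to a whole parallel class at once, shows that twice any such parallel-class-constant integer vector lies in the cut lattice; with that adjustment the argument is complete.
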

\begin{proof}
The case $k=1$ is obvious. The case $k=3$ follows from the four color theorem by Theorem \ref{thm:equiv}. For $k=2$, as in the proof of Theorem \ref{thm:equiv}, we may reduce it to the case where  on every edge the point has value one. As in the proof of Theorem \ref{thm:equiv}, this must be a cut point, and as a consequence the graph must be bipartite. We add $\delta_{\emptyset|V}=0$ to obtain a sum of precisely two lattice points of $\Cut^\square(G)$.
\end{proof}

\section*{Acknowledgements}
We would like to thank Seth Sullivant and Hidefumi Ohsugi for valuable information about cut polytopes. 


\end{document}